\documentclass[11pt]{amsart}
%%%%%%%%%%%%%%%%%%%%%%%%%%%%%%%%%%%%%%%%%%%%%%%%%%%%%%%%%%%%%%%%%%%%%%%%%%%%%%%%%%%%%%%%%%%%%%%%%%%%%%%%%%%%%%%%%%%%%%%%%%%%%%
\usepackage{amsfonts}
\usepackage{amssymb}
\usepackage{amsmath}
\usepackage{dsfont}
\usepackage{color}
\usepackage{graphicx}
\setcounter{MaxMatrixCols}{10}

\textwidth=480pt
\textheight=695pt
\oddsidemargin=-5pt
\evensidemargin=-5pt
\topmargin=-15pt
\newtheorem{theorem}{Theorem}[section]

\newtheorem{proposition}[theorem]{Proposition}

\theoremstyle{definition}
\newtheorem{df}{Definition}
\newtheorem{example}[df]{Example}
\newtheorem{remark}[df]{Remark}
\newtheorem{problem}[df]{Problem}

\newcommand{\N}{\mathbb N}

\newcommand{\R}{\mathbb R}
\newcommand{\Z}{\mathbb Z}

\newcommand{\ve}{\varepsilon}
\newcommand{\on}{\operatorname}
\newcommand{\Var}{\on{Var}}

\newcommand{\mc}{\mathcal}

\linespread{1.3}
\title{Cutting sets of continuous functions on the unit interval}
\author{Marek Balcerzak}
\address{Institute of Mathematics,
         Lodz University of Technology, ul. W\'olcza\'nska 215,
         93-005 \L\'od\'z,
         Poland}
         \email{marek.balcerzak@p.lodz.pl}
         
         \author{Piotr Nowakowski}
\address{Institute of Mathematics,
         Lodz University of Technology, ul. W\'olcza\'nska 215,
         93-005 \L\'od\'z,
         Poland}
         \address{Institute of Mathematics, Czech Academy of Sciences,
\v{Z}itn\'a 25, 115 67 Prague 1, Czech Republic}   
         \email{piotr.nowakowski@edu.p.lodz.pl}
     
\author{Micha{\l} Pop{\l}awski}
\address{Department of Exact and Natural Sciences
		Jan Kochanowski University in Kielce, ul. Uniwersytecka 7, 
    	25-406 Kielce,
        Poland}

\email{michal.poplawski.m@gmail.com}
\subjclass[2010]{54C30, 54E52, 26A30, 26A24} 
\keywords{Cutting set, Baire category, Cantor-type sets, continuous functions, functions of class $C^\infty$, variation of a
real function}
\date{}
\thanks{Piotr Nowakowski was supported by the GA \v{C}R project 20-22230L and RVO: 67985840.}
\begin{document}
\begin{abstract}
For a function $f\colon [0,1]\to\R$, we consider the set $E(f)$ of points at which $f$ cuts the real axis. 
Given $f\colon [0,1]\to\R$
and a Cantor set $D\subset [0,1]$ with $\{0,1\}\subset D$, we obtain conditions equivalent to the conjunction $f\in C[0,1]$ (or $f\in C^\infty [0,1]$) 
and $D\subset E(f)$. This generalizes some ideas of Zabeti. We observe that,
if $f$ is continuous, then $E(f)$ is a closed nowhere dense subset of $f^{-1}[\{ 0\}]$ where each $x\in \{0,1\}\cap E(f)$ is an accumulation point of $E(f)$. Our main result states that, for a closed nowhere dense set $F\subset [0,1]$ with each $x\in \{0,1\}\cap E(f)$ being an accumulation point of $F$, there exists $f\in C^\infty [0,1]$ such that $F=E(f)$.
\end{abstract} 
\maketitle
\section{Introduction}
Our research has been inspired by the studies by Zabeti \cite{Z}.
According to \cite{Z}, we say that a function $f\colon [0,1]\to\R$ cuts the real axis at a point $x\in [0,1]$ provided that $f(x)=0$, and in every neighbourhood $U$ of $x$ there exist $y,z\in U$ such that 
$f(y)<0<f(z)$. Zabeti in \cite{Z} showed
two examples of continuous functions cutting the real axis at every point of a measurable set 
of positive measure.
In the first example, a continuous function is of infinite variation, and in the second example, 
a function has derivatives of all orders.

Given a function $f\colon [0,1]\to\R$, the set of all points $x\in [0,1]$ at which $f$ cuts the real axis
will be called the cutting set for $f$ and denoted by $E(f)$.

\begin{proposition} \label{P0}
If $f\colon [0,1]\to\R$ is a continuous function then $E(f)$ is a closed nowhere dense
subset of $f^{-1}[\{0\}]$. Additionally, each $x\in\{0,1\}\cap E(f)$ is an accumulation point of $E(f)$.
\end{proposition}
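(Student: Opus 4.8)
The plan is to verify the four assertions---the inclusion $E(f)\subseteq f^{-1}[\{0\}]$, closedness, nowhere density, and the accumulation property at the endpoints---essentially in that order, since each later claim reuses the local description of non‑cutting points obtained earlier. The inclusion $E(f)\subseteq f^{-1}[\{0\}]$ is immediate, as cutting at $x$ explicitly demands $f(x)=0$.

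For closedness I would show that the complement $[0,1]\setminus E(f)$ is relatively open. Fix $x\notin E(f)$. Negating the definition, either $f(x)\neq 0$, or else $f(x)=0$ but some relatively open neighbourhood $U$ of $x$ carries no pair $y,z$ with $f(y)<0<f(z)$, i.e. $f\geq 0$ on $U$ or $f\leq 0$ on $U$. In the first case continuity yields a neighbourhood on which $f$ is nonzero, hence one containing no zeros and a fortiori no cutting points. In the second case every point interior to $U$ has a small neighbourhood inside $U$ on which $f$ keeps one weak sign, so it cannot cut. Either way $x$ has a neighbourhood disjoint from $E(f)$, and closedness follows. Nowhere density then reduces to showing $E(f)$ has empty interior: if an open interval $I$ were contained in $E(f)$, the inclusion already proved gives $f\equiv 0$ on $I$, so no point of $I$ sees values of both signs nearby, contradicting $I\subseteq E(f)$.

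The substantive part is the accumulation statement; I would treat $x=0$, the case $x=1$ being symmetric. Assume $0\in E(f)$ and fix $\eta>0$; it suffices to locate one cutting point in $(0,\eta)$, for then such points cluster at $0$. Since $0$ cuts the axis there are $y,z\in(0,\eta)$ with $f(y)<0<f(z)$, and the idea is to produce a cutting point by an extremal argument rather than a bare intermediate value theorem. Taking, say, $z<y$, I would set $c=\sup\{t\in[z,y]:f>0 \text{ on } [z,t]\}$, the right end of the maximal interval on which $f$ stays strictly positive starting from $z$. Continuity forces $f(c)=0$, strictly positive values of $f$ approach $c$ from the left, and for every $t>c$ the value $f$ fails to be positive throughout $[z,t]$, so values with $f\leq 0$ approach $c$ from the right; one then hopes to promote $c$ to an element of $E(f)$.

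The hard part will be precisely this promotion. The construction, like the intermediate value theorem, only guarantees a zero flanked by strictly positive values on one side and nonpositive values on the other, whereas membership in $E(f)$ additionally requires strictly negative values to accumulate at $c$. The genuine obstacle is therefore to exclude the possibility that $f$ rests at $0$, keeping a single weak sign, on a whole sub‑interval separating the region $\{f>0\}$ from $\{f<0\}$ near the endpoint; controlling this behaviour---so as to upgrade the zero furnished by the extremal construction to an actual cutting point---is the crux on which the argument turns, and I expect it to be the main technical difficulty of the proposition.
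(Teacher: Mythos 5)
Your handling of the first three assertions is correct, and it differs from the paper's only mechanically: the paper proves closedness by a convergent-sequence argument and nowhere density by noting that density of $E(f)$ in an interval would force $f\equiv 0$ there, while you pass to the open complement and to the empty-interior formulation of nowhere density for a closed set. The two routes are equivalent and equally short; note that your case analysis for the complement ($f(x)\neq 0$, or $f$ of one weak sign near $x$) is exactly the negation the paper's sequential proof implicitly uses.

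The substance is the endpoint-accumulation clause, and there your proposal stops at a gap that you describe precisely: the extremal point $c$ is only guaranteed to be a zero with strictly positive values clustering on one side and non-positive values on the other, and you cannot exclude that $f$ ``rests at $0$'' on a whole interval separating $\{f>0\}$ from $\{f<0\}$. Two things you could not have known: first, the paper offers nothing to fill this gap --- its entire proof of the clause is the sentence ``The final assertion follows from the definition of $E(f)$,'' which is not an argument; second, and decisively, the obstruction you isolated is fatal, because the clause as stated is \emph{false}. Realize your obstruction infinitely often as one approaches $0$: let $I_k=[\tfrac{1}{k+1},\tfrac{1}{k}]$, let $\phi_k$ be a continuous bump, positive on the interior of $I_k$ and zero at its endpoints, and define $f(0)=0$, $f=\tfrac{1}{k}\phi_k$ on $I_k$ for $k\equiv 1 \pmod 4$, $f=-\tfrac{1}{k}\phi_k$ on $I_k$ for $k\equiv 3 \pmod 4$, and $f\equiv 0$ on $I_k$ for $k$ even. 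Then $f$ is continuous ($|f|\le \tfrac{1}{k}$ on $I_k$ forces continuity at $0$), and $0\in E(f)$ because every $[0,\delta)$ contains entire bumps of both signs. Yet no point of $(0,1]$ cuts the axis: each signed bump is flanked on both sides by intervals on which $f\equiv 0$, so every zero of $f$ in $(0,1]$ has a neighbourhood meeting at most one of the sets $\{f>0\}$, $\{f<0\}$. Hence $E(f)=\{0\}$ and $0$ is an \emph{isolated} point of $E(f)$, contradicting Proposition~\ref{P0}. (Using $C^\infty$ bumps with coefficients $\exp(-1/|I_k|)$ instead of $\tfrac{1}{k}$ makes $f\in C^\infty[0,1]$, so smoothness does not rescue the claim.)

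So your instinct that the ``promotion'' step is the crux was exactly right; the correct conclusion is just stronger than you dared to state: the step cannot be carried out, and only the first three assertions survive. What genuinely does follow from the definition --- and is perhaps what the authors had in mind --- is that every point of $E(f)$ is an accumulation point of $f^{-1}[\{0\}]$, since the intermediate value theorem applied to nearby points $y,z$ with $f(y)<0<f(z)$ produces zeros arbitrarily close by; but, as your analysis and the example above show, such zeros need not themselves be cutting points. This failure does not affect the paper's main result (Theorem~\ref{Cantor}), which is a sufficiency statement; it only means that the endpoint condition appearing there is sufficient but not necessary for a set to be realized as $E(f)$.
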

\begin{proof}
%Since $f$ is continuous, the set $f^{-1}[\{0\}]$ is closed.
Asssume that a sequence $(x_n)$ of points in $E(f)$ is convergent to $x\in [0,1]$.
Then $f(x)=0$ by the continuity of $f$. Fix a neighbourhood $U$ of $x$ and find $x_n\in U$.
Pick a neighbourhood $V\subset U$ of $x_n$. Then there are points $y,z\in V$ such that $f(y)<0<f(z)$.
Hence $x\in E(f)$. Thus $E(f)$ is closed.

Suppose that $E(f)$ is dense in some nondegenerate interval $I$. 
Since $f$ is continuous, it should be constantly equal to zero on $I$ which contradicts the fact that it cuts the real axis 
at every point of $E(f)$. Consequently, $E(f)$ is nowhere dense. The final assertion follows from
the definition of $E(f)$.
\end{proof} 

As it is shown in \cite[Theorem 1]{Z}, there exists a continuous function $f\colon [0,1]\to\R$
with $E(f)$ containing a Cantor-type set of positive measure.
This result will be discussed and generalized in Section 2.
Observe that, by Proposition \ref{P0}, the set $E(f)$ cannot be of full measure in any nondegenerate interval.

If we do not assume that $f$ is continuous, a situation is different.

\begin{example}
Consider (see \cite{KS}) a differentiable function $g\colon [0,1]\to\R$ such that for any interval 
$(a,b)\subset [0,1]$ there exist points $x_1,x_2\in(a,b)$ such that $g'(x_1)<0$ and $g'(x_2)>0$.
This shows that the function $g$ is nowhere monotone in $[0,1]$. Observe that $E(g')=\{ x\colon g'(x)=0\}$.
Also, the set $E(g')$ is dense. Indeed, take any open interval $(a,b)\subset [0,1]$ and pick $x,y\in (a,b)$ such that $g'(x)<0<g'(y)$. Since the derivative $g'$ has the Darboux property (see \cite {Ru}), there exists a point $z$ between $x$ and $y$ such that $g'(z)=0$. It is known that the derivative $g'$ is Baire 1 (we have $g'(x)=\lim_{n\to\infty} n(g(x+1/n)- g(x))$ for
$x\in [0,1)$). Hence $\{ x\colon g'(x)=0\}$ is of type $G_\delta$ 
(see \cite[Theorem 5.14]{Go}). Thus $E(g')$ is a comeager set.
\end{example}

 The article is organized as follows. In Sections 2 and 3, we discuss conditions under which a Cantor set is contained in $E(f)$ for $f\in C[0,1]$ and $f\in C^\infty[0,1]$, respectively. This refers to Theorems 1 and 2 of Zabeti from his paper \cite{Z}. Section 4 contains the main result where we reverse the situation described in Proposition \ref{P0}. Namely,
 for a closed set $F\subset [0,1]$ with the respective properties, we construct a function $f\in C^\infty[0,1]$ such that $E(f)=F$.
 In Section 5, we formulate an open problem concerning the measure of the cutting set of a typical function in $C[0,1]$.
 The Appendix presents a proof of the result, being a byproduct of our considerations, which states that the complement of the set of functions in $C[0,1]$ with infinite variation is a $\sigma$-porous set.

\section{Cantor sets included in the cutting sets of continuous functions}
By the Brouwer theorem \cite[Theorem 7.4]{Ke}, a topological space is homeomorphic to the classical 
ternary Cantor set in $[0,1]$
if and only if, it is perfect, nonempty compact metrizable and zero-dimensional.
Among subspaces of $\R$ those spaces are exactly nonempty perfect, compact and nowhere dense sets.
Such sets will be called Cantor sets in $\R$.  

Zabeti in \cite{Z} considered a subclass of Cantor subsets of $[0,1]$. Because of a construction, they are usually called central Cantor sets. Every central Cantor set is generated by a sequence $(\xi_n)_{n\ge 0}$ with positive real terms where $\xi_0:=1$,
and $0<\xi_{n+1}<(1/2)\xi_n$ for all $n\ge 0$. In the first step, we remove from $[0,1]$ the open middle interval
of length $\xi_0-2\xi_1$. Then we obtain a set $D_1$ as the union of two closed intervals $[0,\xi_1]$, $[1-\xi_1,1]$,
So, the Lebesgue measure $\lambda(D_1)$ is $2\xi_1$. Having constructed $D_n$ as the union of $2^n$ constituent intervals of length $\xi_n$, we remove
from each of them the open middle interval of length $\xi_n-2\xi_{n+1}$. Then we obtain a set $D_{n+1}$ that is the union of $2^{n+1}$ pairwise disjoint closed intervals of length $\xi_{n+1}$. The intersection $D:=\bigcap_n D_n$ is the central Cantor set generated by $(\xi_n)_{n\ge 0}$. Clearly, the measure $\lambda(D)$ equals $\lim\limits_{n \to \infty} 2^n\xi_n$.

Given a function $f\colon [0,1]\to\R$ and a closed interval $I\subset [0,1]$, we denote by $\Var (f,I)$, the variation of $f$ restricted to $I$. In particular, $\Var(f, [0,1])$ is the total variation of $f$ on $[0,1]$.
A result of \cite{Z} shows, for a given central Cantor set $D$ of positive measure $\alpha\in (0,1)$, an example of a continuous function $f\colon [0,1]\to\R$ with infinite variation on $[0,1]$, that cuts the real axis at every point of $D$.
One of our aims is to obtain a strengthened description of this phenomenon.
Our approach is more general. Firstly, we consider all Cantor sets in $[0,1]$, not necesserily central Cantor sets of positive measure as it was done in \cite{Z}. Secondly, we will observe that the main idea of the Zabeti example characterizes all continuous functions $f$ cutting the real axis at any point of a Cantor set, and in fact, we can make $f$ of finite or infinite variation as we want. 

Note that, in the respective interpretation, continuous functions of infinite variation on 
$[0,1]$
are typical since they form a comeager set in the Banach space $C[0,1]$ with the supremum norm. This fact is known
since a function of finite variation on $[0,1]$ is differentiable almost everywhere, and continuous nowhere differentiable
functions on $[0,1]$ form a comeager set in $C[0,1]$; see \cite[Section~11]{Ox}. However, we propose a stronger version
of this fact. 

%for the reader's convenience, we will prove it as 
%the following proposition with a short direct proof.
Let us recall the notions of porous and $\sigma$-porous sets in a metric space (see \cite{Za}, \cite{Za1}).
Let $X$ be a metric space. By $B(x,r)$ we will denote the open ball with center $x \in X$ and radius $r > 0$.
Given $E\subset X$, $x \in X$ and $R >0$, we write $\gamma(x,R,E):=\sup\{r > 0 \colon \exists_{z \in X} \,\,B(z,r) \subset B(x,R)\setminus E\}$. 
We define the porosity of $E$ at $x$ as
\begin{equation} \label{por}
p(E,x):=2 \limsup\limits_{\ve \to 0^+}  \frac{\gamma(x,\ve,E)}{\ve}.
\end{equation}
We say that a set $E$ is porous if its porosity is positive at each $x \in E$. We say that $E$ is $\sigma$-porous if it is a countable union of porous sets. Observe that every porous set is nowhere dense, so every $\sigma$-porous set is meager.

\begin{theorem}\label{BV}
The subset $U$ of $C[0,1]$ that consists of continuous functions with infinite variation on $[0,1]$ is a $G_\delta$
comeager set in this space. Moreover, its complement is $\sigma$-porous.
\end{theorem}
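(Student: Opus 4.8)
The plan is to prove the two assertions separately, noting that by the remark preceding the statement every $\sigma$-porous set is meager; hence once we know $C[0,1]\setminus U$ is $\sigma$-porous, comeagerness of $U$ follows for free. So it suffices to show that $U$ is $G_\delta$ and that its complement is $\sigma$-porous. For the $G_\delta$ part, I would set, for each $n\in\N$,
$$V_n:=\Big\{f\in C[0,1]\colon \exists\, 0=x_0<x_1<\dots<x_k=1,\ \sum_{i=0}^{k-1}|f(x_{i+1})-f(x_i)|>n\Big\}.$$
For a fixed partition the map $f\mapsto \sum_i |f(x_{i+1})-f(x_i)|$ is continuous in the supremum norm, so $V_n$ is a union of open sets and therefore open. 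Since $f$ has infinite variation precisely when its variation exceeds every $n$, we obtain $U=\bigcap_n V_n$, which is $G_\delta$.

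For the $\sigma$-porosity, I would write $C[0,1]\setminus U=\bigcup_m B_m$ where $B_m:=\{f\in C[0,1]\colon \Var(f,[0,1])\le m\}$, and prove that each $B_m$ is porous with porosity bounded below by an absolute constant. Fix $f\in B_m$ and a small radius $\ve>0$. I would perturb $f$ by a piecewise linear zigzag $\phi$ on the partition $0=t_0<t_1<\dots<t_{2N}=1$ with $\phi(t_j)=0$ for even $j$ and $\phi(t_j)=\ve/2$ for odd $j$, so that $\|\phi\|_\infty=\ve/2$ while $\sum_{j}|\phi(t_{j+1})-\phi(t_j)|=N\ve$. Put $g:=f+\phi$ and $\delta:=\ve/8$. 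For any $h$ with $\|h\|_\infty<\delta$, applying the reverse triangle inequality termwise on the partition $(t_j)$ yields
$$\Var(g+h,[0,1])\ \ge\ \sum_{j=0}^{2N-1}\big|(g+h)(t_{j+1})-(g+h)(t_j)\big|\ \ge\ N\ve-\Var(f,[0,1])-4N\delta\ \ge\ N\ve-m-4N\delta,$$
where the middle step uses $\sum_j|\phi(t_{j+1})-\phi(t_j)|=N\ve$ and $\sum_j|h(t_{j+1})-h(t_j)|\le 2N\cdot 2\delta=4N\delta$. Choosing $N$ to be any integer exceeding $4m/\ve$ makes the right-hand side strictly larger than $m$; hence every function in $B(g,\delta)$ has variation $>m$ and thus lies outside $B_m$. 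Moreover $\|g-f\|_\infty=\ve/2$ together with $\delta=\ve/8$ forces $B(g,\delta)\subset B(f,\ve)$. Therefore $\gamma(f,\ve,B_m)\ge \ve/8$ for all small $\ve$, which gives $p(B_m,f)\ge 1/4>0$ uniformly over $f\in B_m$, so each $B_m$ is porous and the complement of $U$ is $\sigma$-porous.

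The routine portions are the openness of the sets $V_n$ and the bookkeeping in the variation estimate. The main obstacle is the quantitative porosity step: it is not enough to find a single high-variation function near $f$, one must produce an entire ball $B(g,\delta)$ disjoint from $B_m$. This is exactly why the number of teeth $N$ must be taken large relative to $\delta$ and $\ve$, so that the margin $N\ve-m-4N\delta$ remains above $m$ even after every perturbation of size $\delta$ has been absorbed. Balancing the three scales — the available room $\ve$, the ball radius $\delta$, and the oscillation count $N$ — so that the ratio $\delta/\ve$ stays bounded below as $\ve\to 0^+$ is the crux of the argument.
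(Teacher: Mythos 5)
Your proposal is correct and takes essentially the same route as the paper: the same $G_\delta$ decomposition $U=\bigcap_n V_n$, the same decomposition of the complement into the sets $\{f\colon \Var(f,[0,1])\le m\}$ (the paper's $C[0,1]\setminus U_n$), and the same porosity argument via a zigzag perturbation of sup-norm $\ve/2$, inner ball radius $\ve/8$, and porosity constant $1/4$. The only (cosmetic) difference is that you fix $f\in B_m$ and choose the number of teeth uniformly from $m$ and $\ve$, whereas the paper fixes an arbitrary $f\in C[0,1]$ and lets the partition depend on $\Var(f,[0,1])$.
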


We postpone a proof of this result to the Appendix.

Now, let us present the promised characterization extending the first example of Zabeti \cite{Z}.

\begin{theorem} \label{T1}
Fix $f\colon [0,1]\to\R$. Let $D\subset [0,1]$ be a Cantor set with $\{0,1\}\subset D$, and let $W_n$, $n\in\N$, 
be a one-to-one sequence of all connected components of $[0,1]\setminus D$. Then $f\in C[0,1]$ with $D\subset E(f)$ if and only if the following conditions hold simultaneously
\begin{itemize} 
\item[(i)] $f|_D=0$, functions $f|_{\on{cl}(W_n)}$, $n\in\N$, are continuous, and the series
$\sum_{n} \chi_{\on{cl}(W_n)} f$ is uniformly convergent on $[0,1]$;
\item[(ii)] $D\subset\on{cl}\left(\bigcup_{n\in M^-}W_n\right)\cap \on{cl}\left(\bigcup_{n\in M^+}W_n\right)$  where
$M^-:=\{ n\colon W_n\cap f^{-1}[(-\infty,0)]\neq\emptyset\}$ and 
$M^+:=\{ n\colon W_n\cap f^{-1}[(0,\infty)]\neq \emptyset\}$.
%\item[(iii)] $\sum_{n\in\N}\Var(f,\on{cl}(W_n))=\infty.$
\end{itemize}
\end{theorem}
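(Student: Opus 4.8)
The plan is to prove both implications, treating the positive and negative parts largely in parallel.

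For the forward direction, assume $f\in C[0,1]$ and $D\subset E(f)$. Since $f$ cuts the axis at each point of $D$, we get $f|_D=0$ at once, and the continuity of each $f|_{\on{cl}(W_n)}$ is inherited from $f$. The first real task is the uniform convergence of $\sum_n \chi_{\on{cl}(W_n)}f$. I would first check that this series sums pointwise to $f$: on a component $W_m$ exactly one characteristic function is nonzero, while on $D$ every term vanishes because $f|_D=0$. Evaluated at a point of $W_m$ with $m>N$, the $N$-th remainder equals $f$ there, so its sup-norm is $\sup_{m>N}\sup_{W_m}|f|$, and uniform convergence reduces to showing $\sup_{W_m}|f|\to 0$ as $m\to\infty$. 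This I would establish by contradiction through compactness: a sequence $x_k\in W_{m_k}$ with distinct $m_k$ and $|f(x_k)|\ge\ve$ has a convergent subsequence whose limit $x^\ast$ satisfies $|f(x^\ast)|\ge\ve$, so $x^\ast\notin D$; then $x^\ast$ lies in some fixed component $W_j$, forcing $x_k\in W_j$ and hence $m_k=j$ for large $k$, contradicting distinctness. Condition (ii) is then almost immediate: given $x\in D\subset E(f)$, every neighbourhood of $x$ contains points $y,z$ with $f(y)<0<f(z)$; such $y,z$ cannot lie in $D$, so they sit in components belonging to $M^-$ and $M^+$ respectively, placing $x$ in both closures.

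For the converse, assume (i) and (ii). Continuity of $f$ would follow by observing that each summand $\chi_{\on{cl}(W_n)}f$ is continuous on $[0,1]$ (it equals the continuous $f$ on the closed interval $\on{cl}(W_n)$, vanishes at the endpoints since these lie in $D$, and is locally zero off that interval), that the series sums to $f$ as above, and that a uniform limit of continuous functions is continuous. With $f|_D=0$ already in hand, what remains is the cutting property at each $x\in D$.

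This last step is where I expect the main obstacle. The naive reading of (ii) only produces, near a given $x\in D$, a point lying in some $M^-$ component, but that component may be large and its negative value may sit far from $x$, so membership in $\on{cl}\left(\bigcup_{n\in M^-}W_n\right)$ does not by itself supply negative values of $f$ arbitrarily close to $x$. The device I would use to overcome this is a trapping argument exploiting that (ii) holds at all points of $D$ simultaneously. Fixing a neighbourhood $(x-\delta,x+\delta)$, I would choose three points $y_1<y_0<y_2$ of $D$ inside it, which exist since $x$ is an accumulation point of the perfect set $D$. Because components avoid $D$, any $W_n$ meeting the open interval $(y_1,y_2)$ is forced to be contained in it, for otherwise $y_1$ or $y_2$ would fall inside $W_n$. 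If no $M^-$ component met $(y_1,y_2)$, then $(y_1,y_2)$ would be a neighbourhood of the interior point $y_0$ disjoint from $\bigcup_{n\in M^-}W_n$, contradicting $y_0\in\on{cl}\left(\bigcup_{n\in M^-}W_n\right)$ from (ii). Hence some $M^-$ component lies entirely in $(y_1,y_2)\subset(x-\delta,x+\delta)$, and its guaranteed negative value of $f$ then lands in the prescribed neighbourhood; the symmetric argument with $M^+$ supplies a positive value. Since $f(x)=0$, this shows $x\in E(f)$ and completes the proof.
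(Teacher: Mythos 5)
Your proof is correct, and its skeleton matches the paper's: necessity via a compactness/continuity contradiction for the uniform convergence plus the immediate argument for (ii), sufficiency via continuity of the summands and uniform convergence. The one place you genuinely diverge is the cutting property in the sufficiency direction, and the divergence is to your credit. The paper disposes of that step in a single sentence --- ``by (ii), in every neighbourhood of $x$ we can pick points $y,z\in\bigcup_{n\in\N}W_n$ such that $f(y)<0<f(z)$'' --- which conceals exactly the subtlety you flag: membership of $x$ in $\on{cl}\left(\bigcup_{n\in M^-}W_n\right)$ only yields a point of some $M^-$-component near $x$, while the witness with $f<0$ may sit at the far end of that component, outside the given neighbourhood. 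Your trapping argument (pick $y_1<y_0<y_2$ in $D$ inside the neighbourhood, observe that any component meeting $(y_1,y_2)$ must be wholly contained in it because components are disjoint from $D$, then apply (ii) at the interior point $y_0$ to capture an entire $M^-$-component, and hence a negative value of $f$, inside the neighbourhood) is precisely the justification the paper's terse phrasing needs, and it is the right one. The remaining differences are cosmetic: for uniform convergence you argue that the limit $x^*$ of the bad sequence satisfies $|f(x^*)|\ge\ve$, hence lies in a single open component, contradicting distinctness of the indices, whereas the paper notes that the lengths of the distinct components tend to $0$, forcing $x^*\in D$ and contradicting $f(x^*)=0$; both are sound.
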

\begin{proof}
{\bf Necessity.} To show (i) it suffices to prove that the series
$\sum_{n}\chi_{\on{cl}(W_n)}f$ is uniformly convergent on $[0,1]$. Suppose it is not the case.
So, the uniform Cauchy condition fails. Hence there exist $\ve>0$, infinitely many indices
$n_1<n_2<\dots$ and points $x_j\in W_{n_j}$ such that $|f(x_j)|\ge\ve$ for all $j\in\N$. Pick a convergent subequence 
$x_{k_j}\to x$. Since $\lambda(W_{n_j})\to 0$, we have $x\in D$. But $f(x)=0$ which together with $|f(x_{k_j})|\ge\ve$ for $j\in\N$
contradicts the continuity of $f$ at $x$.

To show (ii) note that, since $D\subset E(f)$, around every point $x\in D$ one can find points
$y$ and $z$ with $f(y)<0<f(z)$. Since $f|_D=0$, we have $y\in\bigcup_{n\in M^-}W_n$ and $z\in\bigcup_{n\in M^+}W_n$.

{\bf Sufficiency.} From (i) it follows that $f$ is continuous on $[0,1]$. Let $x\in D$. Then $f(x)=0$ and by (ii),
in every neighbourhood of $x$ we can pick points $y,z\in\bigcup_{n\in\N}W_n$ such that
$f(y)<0<f(z)$ as desired.
\end{proof}

A variant of the above theorem, where one requires the infinite variation of $f\in C[0,1]$, can be stated as follows.

\begin{theorem} \label{T1prim}
Fix $f\colon [0,1]\to\R$. Let $D\subset [0,1]$ be a Cantor set with $\{0,1\}\subset D$, and let $W_n$, $n\in\N$, 
be a one-to-one sequence of all connected components of $[0,1]\setminus D$. Then $f\in C[0,1]$ with $\Var(f,[0,1])=\infty$  and $D\subset E(f)$ if and only if conditions \em{(i), (ii)} stated in Theorem \ref{T1} hold together with
\begin{itemize} 
%\item[(i)] $f|_C=0$, functions $f\chi_{\on{cl}(B_n)}$, $n\in\N$, are continuous, and the series
%$\sum_{n} f\chi_{\on{cl}(B_n)}$ is uniformly convergent on $[0,1]$;
%\item[(ii)] $\bigcup_{n\in M}B_n$ is dense in $[0,1]$ where
%$$M:=\{ n\colon B_n\cap f^{-1}[(0,\infty)]\neq \emptyset\;\land\; B_n\cap f^{-1}[(-\infty,0)]\neq\emptyset\}.$$
\item[(iii)] $\sum_{n\in\N}\Var(f,\on{cl}(W_n))=\infty.$
\end{itemize}
\end{theorem}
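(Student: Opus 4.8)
The plan is to build on Theorem~\ref{T1}, since Theorem~\ref{T1prim} merely adds condition~(iii) to the characterization already established there. The equivalence for the pure continuity part with $D\subset E(f)$ is exactly Theorem~\ref{T1}, so everything reduces to showing that, \emph{given} that (i) and (ii) already hold (so that $f\in C[0,1]$ and $D\subset E(f)$), the extra clause $\Var(f,[0,1])=\infty$ is equivalent to (iii). Thus I would state at the outset that it suffices to prove
\[
\Var(f,[0,1])=\infty \iff \sum_{n\in\N}\Var(f,\on{cl}(W_n))=\infty,
\]
and the whole work is the verification of this single equivalence under the standing hypothesis that $f$ is continuous, vanishes on $D$, and satisfies the uniform convergence in~(i).

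First I would prove the direction $(\Leftarrow)$, which is the easy half: since each $\on{cl}(W_n)\subset[0,1]$ and the closures of distinct components overlap in at most endpoints lying in $D$ (where $f=0$), the variation is superadditive over these subintervals, giving
\[
\Var(f,[0,1])\ \ge\ \sum_{n\in\N}\Var(f,\on{cl}(W_n)).
\]
Hence if the right-hand sum diverges, so does the total variation. The precise bookkeeping here is that for any finite collection of the $\on{cl}(W_n)$ the variations add up to at most $\Var(f,[0,1])$ (finite subadditivity of a partition refinement), and letting the collection exhaust $\N$ yields the displayed inequality.

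The harder direction is $(\Rightarrow)$, or equivalently its contrapositive: assuming $\sum_{n}\Var(f,\on{cl}(W_n))<\infty$, I would show $\Var(f,[0,1])<\infty$. The main obstacle is that a partition of $[0,1]$ can straddle the gaps $W_n$ and the Cantor set $D$ in a complicated way, so the partition points need not respect the component structure. The key step is to take an arbitrary partition $0=t_0<t_1<\dots<t_k=1$ and estimate $\sum_j |f(t_{j+1})-f(t_j)|$ by refining it: insert all the endpoints of those components $W_n$ that meet $(t_j,t_{j+1})$, which only increases the sum, and then group the resulting increments. Consecutive partition points both lying in $D$ contribute $0$ because $f|_D=0$; increments across a single component $\on{cl}(W_n)$ are bounded by $\Var(f,\on{cl}(W_n))$; and the finitely many ``boundary'' increments where a partition point sits inside some $W_n$ are absorbed into the variation over that same component. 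Summing over all $n$ then bounds every partition sum by $\sum_n \Var(f,\on{cl}(W_n))<\infty$, whence $\Var(f,[0,1])\le\sum_n\Var(f,\on{cl}(W_n))<\infty$. Combined with the inequality from the $(\Leftarrow)$ step, this in fact shows $\Var(f,[0,1])=\sum_n\Var(f,\on{cl}(W_n))$, and the equivalence in~(iii) follows immediately.

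I expect the delicate point to be the rearrangement in the $(\Rightarrow)$ estimate: one must argue that no ``mass'' of the variation is lost in $D$ itself. This uses crucially that $D$ is nowhere dense with $\lambda$-null complement structure irrelevant here, but rather that $f=0$ on $D$, so any increment between two points of $D$ telescopes through the intervening components and is already counted. A clean way to make this rigorous is to note that $[0,1]\setminus D=\bigcup_n W_n$ is open dense in its own closure relative to the gaps, and that the variation of a continuous function over $[0,1]$ equals the supremum of partition sums using only partition points in the dense set $\bigcup_n \partial W_n \cup \{0,1\}$; once all partition points are taken to be endpoints of components, the partition sum is \emph{exactly} a finite subsum of $\sum_n \Var(f,\on{cl}(W_n))$, and the bound is transparent.
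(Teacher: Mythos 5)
Your proposal follows the same route as the paper: the paper's entire proof consists of the observation that, since $f|_D=0$, one has $\Var(f,[0,1])=\sum_{n\in\N}\Var(f,\on{cl}(W_n))$, after which everything is a consequence of Theorem \ref{T1}. Your reduction is identical, and your middle paragraphs supply a correct detailed verification of this identity (which the paper states without proof): increments between two points of $D$ vanish because $f|_D=0$, and the portions of each $\on{cl}(W_n)$ used by a given partition are non-overlapping, so every partition sum is at most $\sum_n\Var(f,\on{cl}(W_n))$; superadditivity gives the reverse inequality.

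Two cautions, one of them substantive. First, ``insert all the endpoints of those components $W_n$ that meet $(t_j,t_{j+1})$'' may require infinitely many insertions, which is not a refinement of a finite partition; it suffices to insert the endpoints of the (at most $k$) components that \emph{contain} partition points, and then invoke $f|_D=0$ to kill the increments between consecutive points of $D$ --- which is exactly the grouping you describe afterwards, so this is a fixable slip. Second, and more seriously, the ``clean way'' in your final paragraph is wrong: the endpoints of the components $W_n$ all lie in $D$, where $f$ vanishes, so every partition sum using only points of $\bigcup_n\partial W_n\cup\{0,1\}$ is identically $0$ and cannot recover $\Var(f,[0,1])$; moreover that set is not dense in $[0,1]$, since it misses the interiors of long components. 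That paragraph therefore cannot serve as the rigorization of the delicate step --- the grouping argument itself (suitably repaired as above) is the correct and complete one.
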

\begin{proof}
Since $f|_D=0$, it is enough to observe that
$\Var(f,[0,1])=\sum_{n\in\N}\Var(f,\on{cl}(W_n))$.
The rest is a consequence of the previous theorem.
\end{proof}

\begin{remark}
The example given in \cite[Theorem 1]{Z} follows the scheme of sufficiency in Theorem \ref{T1prim}.
Namely, the author considers a central Cantor set $D$ generated by a sequence $(\xi_n)_{n\ge 0}$ with $\lambda(D)>0$.
If $W=(a,b)$ is any open interval that has been removed from $[0,1]$ in the $n$-th step of the construction of $D$, the value $f(x)$ for $x\in W$ is simply defined
as $c_n\sin\left(\frac{2\pi x}{b-a}\right)$ where $c_n>0$ with
\begin{equation} \label{E1}
\sum_n c_n<\infty\;\; \mbox{and} \lim_{n\to\infty}2^n c_n=\infty .
\end{equation}
Also, let $f(x):=0$ for each $x\in D$.
The first part of (\ref{E1}) guarantees that the series $\sum_{k} f\chi_{\on{cl}(W_k)}$ is uniformly convergent on $[0,1]$
(cf. condition (i)), and the second part of (\ref{E1}) forces that the total variation of $f$ on $[0,1]$ is infinite.
%$\Var(f, [0,1])=\infty$. 
Indeed, note that $\Var(f, \on{cl}(W))=2c_n$
and then it suffices to use (iii).
The definition of $f$ on $W$ clearly implies that condition (ii) holds.

In the case of an arbitrary Cantor set $D\subset [0,1]$ with $\{0,1\}\subset D$, one can mimic this construction. 
Firstly, we order the sequence
of all open components $W_n$, $n\in\N$, of $[0,1]\setminus D$ with respect to decreasing lenghts
(if a few intervals have the same length, they can be ordered arbitrarily).
Using this ordering, we define $f|_{\on{cl}(W_n)}$, $n\in\N$, step by step taking care to obtain (i) and (ii). We let $f(x):=0$ for $x\in D$.
Additionally,  if we want to control $V(f, [0,1])$, we should use condition (iii).
\end{remark}

\section{The case of functions of class $C^\infty [0,1]$}
We denote by $C^\infty [0,1]$ the vector space of functions $f\colon [0,1]\to\R$ that have derivatives of all 
orders on $[0,1]$. This is a complete space when we use the metric $d$ given by
$$d(f,g)=\sum_{n=0}^\infty2^{-n}\min\{1, ||f^{(n)}(x)-g^{(n)}(x)||\}$$
where $||\cdot ||$ denotes as before, the supremum norm.
In \cite[Theorem 2]{Z}, the author constructed a function $f\in C^\infty [0,1]$ and a measurable set $D\subset [0,1]$ 
(of type $G_\delta$) of positive measure (that can be arbitarily close to 1) such that $D\subset E(f)$.
In this case, the choice of a set $D$ is different from that proposed previously in \cite[Theorem 1]{Z} since $D$ is not a Cantor set.
However, we have observed that the previous technique still works after the respective modification.
Namely, the counterpart of Theorem \ref{T1} holds in the following version.

\begin{theorem} \label{T2}
Fix $f\colon [0,1]\to\R$. Let $D$ be a Cantor set in $[0,1]$ with $\{0,1\}\subset D$, and let $W_n$, $n\in\N$, be a one-to-one sequence of all connected components of $[0,1]\setminus D$. Then $f$ is in $C^\infty [0,1]$ with $D\subset E(f)$ if and only if the following conditions hold simultaneously
\begin{itemize} 
\item[(i)] $f|_D=0$, functions $f|_{\on{cl}(W_n)}$, $n\in\N$, are in $C^\infty ({\on{cl}(W_n)})$, and the series
$\sum_{n} \chi_{\on{cl}(W_n)}f^{(p)}$, for $p\ge 0$, are uniformly convergent on $[0,1]$;
\item[(ii)] $D\subset\on{cl}\left(\bigcup_{n\in M^-}W_n\right)\cap \on{cl}\left(\bigcup_{n\in M^+}W_n\right)$  where
$M^-:=\{ n\colon W_n\cap f^{-1}[(-\infty,0)]\neq\emptyset\}$ and 
$M^+:=\{ n\colon W_n\cap f^{-1}[(0,\infty)]\neq \emptyset\}$.
\end{itemize}
\end{theorem}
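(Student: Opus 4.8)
The plan is to mirror the structure of the proof of Theorem \ref{T1}, splitting into necessity and sufficiency, but upgrading the continuity arguments to handle derivatives of all orders. The condition (ii) is identical to the one in Theorem \ref{T1} and governs the cutting behaviour rather than the smoothness, so its treatment should carry over verbatim: necessity of (ii) follows because $D\subset E(f)$ forces sign-changing points arbitrarily close to each $x\in D$, and these must lie in components indexed by $M^-$ and $M^+$ since $f|_D=0$; sufficiency of the cutting property given (i) and (ii) is likewise unchanged. Thus the real content lies in reconciling (i) with membership in $C^\infty[0,1]$.

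For \textbf{sufficiency}, I would argue that (i) forces $f\in C^\infty[0,1]$. On the interior of each $\on{cl}(W_n)$ the function is visibly $C^\infty$, and $f$ vanishes on $D$; the issue is differentiability of all orders \emph{at the points of} $D$, which are limits of the endpoints of the $W_n$. The key tool is the theorem on term-by-term differentiation of uniformly convergent series: if $\sum_n \chi_{\on{cl}(W_n)}f$ converges uniformly and the series of $p$-th derivatives $\sum_n \chi_{\on{cl}(W_n)}f^{(p)}$ converges uniformly for every $p\ge 0$, then the sum $f$ is $C^\infty$ and $f^{(p)}=\sum_n\chi_{\on{cl}(W_n)}f^{(p)}$. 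One must be slightly careful because the summands $\chi_{\on{cl}(W_n)}f$ are not themselves globally smooth (they are supported on a single closed component), so I would instead reason locally: at a point $x\in D$, I would show directly that all difference quotients of $f^{(p-1)}$ tend to the value predicted by the uniformly convergent series $\sum_n\chi_{\on{cl}(W_n)}f^{(p)}$, using that this series sum vanishes on $D$ together with a mean-value-type estimate bounding $f^{(p-1)}$ near $x$ by the tail of the uniformly convergent series.

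For \textbf{necessity}, assuming $f\in C^\infty[0,1]$ with $D\subset E(f)$, the first two clauses of (i) are immediate: $f|_D=0$ follows from $D\subset E(f)\subset f^{-1}[\{0\}]$, and each $f|_{\on{cl}(W_n)}$ is $C^\infty$ by restriction. It remains to prove uniform convergence of each series $\sum_n\chi_{\on{cl}(W_n)}f^{(p)}$. The case $p=0$ is exactly the argument from Theorem \ref{T1}. For $p\ge 1$ I would run the same Cauchy-condition contradiction argument applied to $f^{(p)}$: if uniform convergence failed, I could extract indices $n_j$ and points $x_j\in W_{n_j}$ with $|f^{(p)}(x_j)|\ge\ve$, pass to a convergent subsequence $x_{k_j}\to x\in D$, and derive a contradiction with the continuity of $f^{(p)}$ at $x$. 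The point needing care here is that I must know $f^{(p)}(x)=0$ for $x\in D$; this is not part of the hypothesis but follows because $f$ attains a local extremum or vanishes along $D$ in a way that forces all derivatives to vanish at accumulation points of the zero set $D$ (a perfect set), which I would justify by a Rolle-type argument showing that between two zeros of $f^{(p-1)}$ lies a zero of $f^{(p)}$, combined with the density of zeros near each point of $D$.

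The step I expect to be the main obstacle is establishing, in the necessity direction, that $f^{(p)}|_D=0$ for all $p$, i.e.\ that every derivative is \emph{flat} on the Cantor set $D$. Near an isolated portion of $D$ this is the classical fact that all derivatives vanish at a limit point of zeros of a smooth function, but $D$ is perfect and nowhere dense with a complicated component structure, so I would need to argue uniformly and inductively in $p$: from $f^{(p-1)}|_D=0$ and the accumulation of zeros of $f^{(p-1)}$ at each point of $D$, Rolle's theorem produces zeros of $f^{(p)}$ accumulating at each $x\in D$, whence $f^{(p)}(x)=0$ by continuity. Once this flatness is in hand, the uniform-convergence contradiction argument for each $p$ proceeds exactly as in Theorem \ref{T1}, and the remaining verifications are routine.
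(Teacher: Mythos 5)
Your treatment of necessity and of condition (ii) is correct, and it supplies precisely the ingredient that the paper hides behind the single sentence ``The proof is analogous to that for Theorem \ref{T1}'': for $p\ge 1$ the uniform-Cauchy contradiction argument needs $f^{(p)}(x)=0$ at the limit point $x\in D$, and your inductive Rolle argument (zeros of $f^{(p-1)}$ accumulate at every point of the perfect set $D$, hence Rolle produces zeros of $f^{(p)}$ accumulating at every point of $D$, hence $f^{(p)}|_D=0$ by continuity) establishes this flatness correctly. This is a genuine addition over the paper's own one-line proof.

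The gap is in your sufficiency argument, at the step ``using that this series sum vanishes on $D$''. For $p=0$ this follows from $f|_D=0$, but for $p\ge 1$ it does not follow from (i): at an endpoint $x$ of a component $W_n$ --- and such endpoints belong to $D$ --- the sum of $\sum_m \chi_{\on{cl}(W_m)}f^{(p)}$ equals the one-sided derivative $(f|_{\on{cl}(W_n)})^{(p)}(x)$ (closures of distinct components are disjoint since $D$ is perfect), and (i) places no constraint on this value: uniform convergence only forces $\sup_{\on{cl}(W_n)}|f^{(p)}|\to 0$ as $n\to\infty$, i.e.\ it constrains the tail, not any single term. The failure cannot be repaired by a cleverer estimate, because the implication is false under this (the only non-circular) reading of (i): let $D$ be the ternary Cantor set, $W_1=(1/3,2/3)$, put $f(x)=(x-1/3)(2/3-x)$ on $\on{cl}(W_1)$, and on every other component use the small flat oscillating bumps of Example \ref{exx}; then (i) holds, and (ii) holds because every point of $D$ is a limit of components $W_n$, $n\ge 2$, on which $f$ takes both signs, yet $f$ is not differentiable at $1/3$, since the right-hand derivative there equals $1/3$ while the difference quotients along $D\cap(0,1/3)$ tend to $0$. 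What is missing --- from your proof and, strictly read, from the statement of the theorem itself --- is the endpoint-flatness clause: all derivatives $(f|_{\on{cl}(W_n)})^{(p)}$, $p\ge 1$, vanish at both endpoints of each $W_n$. That clause is exactly what your necessity argument proves in the other direction, and it is satisfied in the paper's constructions (Example \ref{exx}, Theorem \ref{Cantor}) because $h$ is flat at $0$ and $1$; once it is added to (i), your local difference-quotient and mean-value scheme for sufficiency does go through.
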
 
The proof is analogous to that for Theorem \ref{T1}.

The following example modifies the idea of \cite[Theorem 2]{Z}. We repeat the method constructing the respective 
function $f\in C^\infty [0,1]$. Using the sufficiency part of Theorem \ref{T2}, we will ensure that, for a given Cantor set 
$D\subset [0,1]$, $f$ can be chosen so that $D\subset E(f)$. Therefore, we connect somehow the ideas of the two results stated in \cite{Z}.

\begin{example} \label{exx}
Consider the following function $h\colon \R\to\R$ which has derivatives of all orders on $\R$
and obtains non-zero (positive) values exactly on $(0,1)$:
$$h(x):=\left\{\begin{array}{ll}
e^{-x^{-2}} e^{-(x-1)^{-2}}, & \text{ if } x\in (0,1);\\
0, & \text{ otherwise}.
\end{array}\right.$$
Fix a Cantor set $D$ in $[0,1]$ with $\{0,1\}\subset D$, and let $W_n$, $n\in\N$, be a one-to-one sequence of all connected components of $[0,1]\setminus D$. With any $W_n:=(a_n,b_n)$, we associate the function $f_n\colon [0,1]\to\R$ given by
\begin{equation} \label{sinus} 
f_n(x):=c_n h\left(\frac{x-a_n}{b_n-a_n}\right)\sin\left(\frac{2\pi(x-a_n)}{b_n-a_n}\right)
\end{equation}
where a sequence $(c_k)$ will be chosen later. Then $f_n|_{\on{cl}(W_n)}\in C^\infty(\on{cl}(W_n))$ and, thanks to the factor with the function sinus, it attains negative and positive values on $W_n$. Denote $\ve_n:=b_n-a_n$.

Fix an integer $p\ge 0$. By the Leibniz rule, we can calculate the $p$-th derivative of $f_n$ at a point $x\in W_n$ as follows
$$f_n^{(p)}(x)=c_n\ve_n^{-p}(2\pi)^p\sum_{k=0}^p\binom{p}{k}h^{(k)}\left(\frac{x-a_n}{\ve_n}\right)(2\pi)^{-k}
\sin^{(p-k)}\left(\frac{2\pi(x-a_n)}{\ve_n}\right).$$
Note that 
\begin{equation} 
||f_n^{(p)}||\le c_n\ve_n^{-p}M_p\;\;\mbox{ for all }n\in\N
\end{equation}
 where $M_p:=(2\pi)^p\sum_{k=0}^p\binom{p}{k}(2\pi)^{-k}||h^{(k)}||$.
 Hence every series $\sum_{n=1}^\infty f_n^{(p)}$, $p\ge 0$, is uniformly convergent on $[0,1]$ whenever 
 $(c_n)$ is chosen so that $\sum_{n=1}^\infty c_n\ve_n^{-p}<\infty$ since then the Weierstrass test works.
 For instance, the choice $c_n:=\frac{1}{n^2}\exp\left(\frac{-1}{\ve_n}\right)$ is good since
 $\lim_{x\to 0^+}x^{-p}\exp\left(\frac{-1}{x}\right)=0$.
 
 Therefore, we have checked that conditions (i) and (ii) of Theorem \ref{T2} are satisfied if $f:=\sum_{n=1}^\infty f_n$, 
 and so, $f\in C^\infty [0,1]$ with $D\subset E(f)$.
\end{example}

\section{Main result}
An interesting question is connected with the converse to the situation described in Proposition~\ref{P0}.
Namely, given a nonempty closed nowhere dense set $F\subset [0,1]$ with $\{0,1\}\cap F$ consisting of accumulation points of $F$, we need to 
find a function $f\in C[0,1]$ 
(or even, $f\in C^\infty [0,1]$) such that $E(f)=F$.
Note that  Example \ref{exx} does not give a solution to this problem in the case if $F$ is a Cantor set with 
$\{0,1\}\subset F$ since functions $f_n$, defined in (\ref{sinus}), produce some points in $E(f)$ which are not 
in $F$.

Our main theorem will give a positive answer to the above question with a function of class $C^\infty[0,1]$.

Let $I\subset\R$ be a bounded interval. By $l(I),r(I)$ and $|I|$ we will denote the left endpoint of $I$, 
the right endpoint of $I$ and the length of $I$, respectively.
We say that an interval $I$ is a neighbour of an interval $J$ if $I$ and $J$ have a common endpoint. 
%Then $I$ is called a neigbouring interval of $J$ and vice versa. 
We denote $\{0,1\}^{<\N}:=\bigcup_{n\ge 0}\{0,1\}^n$.
\begin{theorem} \label{Cantor}
Let $F\subset [0,1]$ be a closed nowhere dense set with $\{0,1\}\cap F$ (if nonempty) consisting of accumulation points 
of $F$. Then there exists a function $f\in C^\infty [0,1]$ with
$E(f)=F$.
\end{theorem}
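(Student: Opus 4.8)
The plan is to build $f$ directly from its prescribed values on the gaps of $F$. Write $[0,1]\setminus F=\bigcup_n W_n$ as the union of its connected components, which are open intervals except possibly for a boundary component having $0$ or $1$ as an endpoint (this happens exactly when $0\notin F$ or $1\notin F$). I set $f\equiv 0$ on $F$, and on each gap I place a sequence of pairwise disjoint smooth \emph{bumps} — scaled copies of $\pm h$, where $h$ is the $C^\infty$ function from Example \ref{exx} that vanishes together with all its derivatives at the endpoints of its support — separated by \emph{flat} stretches on which $f\equiv 0$. The two requirements to meet are $E(f)=F$ and $f\in C^\infty[0,1]$, and I treat them in turn: the first is the geometric heart of the matter, the second reduces to the summability estimate already seen in Example \ref{exx}.

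For the placement, in each gap $W_n=(a_n,b_n)$ I fix a bi-infinite increasing sequence $(t_j)_{j\in\Z}\subset W_n$ with $t_j\to a_n$ as $j\to-\infty$ and $t_j\to b_n$ as $j\to+\infty$, so the partition points accumulate at both endpoints. On each $[t_j,t_{j+1}]$ I put one bump of sign $(-1)^j$ supported in a central subinterval, leaving a flat zero collar on each side; these collars force $f\equiv 0$ on a whole neighbourhood of every $t_j$. The resulting family of subintervals, across all gaps, can be organized by the tree $\{0,1\}^{<\N}$, which also streamlines the height bookkeeping below. I then check $E(f)=F$ by two inclusions. First, no point of a gap is cutting: on a flat piece $f$ vanishes on an entire neighbourhood, while at a junction between a flat piece and a bump the function keeps a single sign locally, so the passage from a positive bump to a negative one always occurs across an interval of zeros and never at a single point. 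Second, every $x\in F$ is cutting: if $x$ is an endpoint of some gap $W_n$, the alternating bumps accumulating at that endpoint realize both signs in every one-sided neighbourhood of $x$ inside $W_n$; and if $x$ is an endpoint of no gap, then since $F$ is nowhere dense whole gaps $W_m$ shrink down to $x$ from at least one side, each such gap carrying both signs. The hypothesis that a point of $\{0,1\}\cap F$ be an accumulation point of $F$ enters precisely here: it guarantees that $0$ (resp.\ $1$), when it lies in $F$, is approached by whole gaps rather than being the endpoint of a single boundary gap, so it too is cutting; and when $0\notin F$ (resp.\ $1\notin F$) the boundary gap is handled by oscillating only toward its interior endpoint in $F$ while keeping $f$ of one sign near $0$ (resp.\ $1$).

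It then remains to arrange $f\in C^\infty[0,1]$, and here I would reuse the mechanism of Example \ref{exx}. Writing $f=\sum_k B_k$, where $B_k$ is a copy of $\pm h$ of height $c_k$ on an interval of length $w_k$, the Leibniz and chain rules give $\|B_k^{(p)}\|\le c_k\,w_k^{-p}M_p$ with $M_p$ as in Example \ref{exx}. Choosing the heights tiny relative to the widths, for instance $c_k\le 2^{-k}\exp(-1/w_k)$, makes $\sum_k c_k w_k^{-p}$ finite for every $p\ge 0$, since $\sup_{w>0}w^{-p}\exp(-1/w)<\infty$. The Weierstrass $M$-test then yields uniform convergence of each series $\sum_k B_k^{(p)}$ on $[0,1]$, so (exactly as in condition (i) of Theorem \ref{T2}) term-by-term differentiation is legitimate, $f\in C^\infty[0,1]$, and $f^{(p)}=\sum_k B_k^{(p)}$; as every $x\in F$ lies outside the closed support of each $B_k$, all derivatives of $f$ vanish on $F$. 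Crucially the heights need only be positive, so shrinking them does not disturb the sign pattern fixed above.

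The main obstacle is the exact equality $E(f)=F$, not the smoothness. Forcing each point of $F$ to be a cutting point pushes one toward making $f$ oscillate near the points of $F$; but a continuous function that assumes both signs near a point without vanishing on an interval there must, by the intermediate value theorem, cut the axis at some intermediate point, so a careless oscillation inside a gap would manufacture a whole sequence of unwanted cutting points accumulating at the gap's endpoints, giving $E(f)\supsetneq F$. The flat zero collars between consecutive bumps are exactly the device that reconciles the two demands: they let $f$ realize both signs arbitrarily close to each point of $F$ while routing every sign change through an interval of zeros, where no cutting occurs. Once this is in place, verifying the two inclusions for $E(f)$ and invoking the summability estimate for membership in $C^\infty[0,1]$ are both routine, and the outcome is consistent with the necessity recorded in Proposition \ref{P0}.
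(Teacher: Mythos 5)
Your proof is correct, and it takes a genuinely different route from the paper's. The paper decomposes $F$ via the Cantor--Bendixson theorem into a perfect kernel $D$ and a countable part, erects the dyadic tree of basic intervals and gaps for $D$, fills each connected component of a gap minus $F$ with a single bump that is nonzero on the \emph{whole} component, and then fixes the signs by a global combinatorial scheme (alternation on neighbouring components, the order types $1^0$--$4^0$, parity of the level of a gap) so that both signs appear near every point of $F$; since that $f$ vanishes exactly on $F$, the inclusion $E(f)\subset F$ is automatic and all the work lies in the sign bookkeeping. You work instead directly with the components of $[0,1]\setminus F$ and put into each one infinitely many alternating bumps separated by flat zero collars, accumulating at the component's endpoints that lie in $F$; then both signs appear near every point of $F$ with no global coordination at all (no Cantor--Bendixson, no tree, no case analysis, and countable $F$ --- which the paper dismisses as ``simpler'' --- is handled uniformly), while the burden shifts to the inclusion $E(f)\subset F$, which your flat-collar argument settles correctly: inside a gap the bump supports are locally finite and every sign change is routed through an interval of zeros, so each zero of $f$ outside $F$ has a one-signed neighbourhood and is not a cutting point. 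What the paper's scheme buys instead is the extra property $f^{-1}[\{0\}]=F$, which a collared construction by design cannot have. One byproduct you should make explicit: your construction never actually uses the hypothesis that the points of $\{0,1\}\cap F$ are accumulation points of $F$ (an endpoint of a gap is cutting for your $f$ no matter what), so your method proves a formally stronger statement; applied to $F=\{0\}$ it produces $f\in C^\infty[0,1]$ with $E(f)=\{0\}$, which contradicts the final assertion of Proposition \ref{P0}. The discrepancy is not a flaw in your argument but in that assertion of the proposition, whose one-line justification (``follows from the definition'') overlooks precisely the phenomenon your collars exploit: both signs can accumulate at $0$ while every nearby sign change occurs across an interval of zeros, so $0$ can be an isolated cutting point. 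The theorem you were asked to prove is, of course, still correct under its stated hypothesis.
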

%%%%%%%%%%%%%%%%%%%%%%%%%%%%%%%%%%%%%%%%%%%%%%%%%

\begin{proof}
Fix $F$ as in the assumption of the theorem. 
Since $F$ is closed, (by the classical Cantor-Bendixson theorem) it can be expressed as a union of a perfect set $D$ and a countable set $Q$. Assume that $D\neq\emptyset$ since otherwise, the proof is simpler. The set $D$ is a Cantor set as a perfect and nowhere dense subset of $[0,1]$.
We will express $D$ in the fashion similar to that used for the 
Cantor ternary set. Let $I$ be a minimal closed interval containing $D$. Let $J$ denote the leftmost longest connected component of $[0,1]\setminus D$.
This is called a gap of level $0$. Denote by $I_{0}$ and $I_{1}$ (respectively) the left and the right
components of $I\setminus J$; they will be called basic intervals of level $1$.
Inside the intervals $I_{k}$, for $k=0,1$, we pick the leftmost longest components of $I_{k}\setminus D$;
we denote them $J_{k}$ and call gaps of level $1$.
The further construction goes inductively. For any $n\in\N$, we obtain the family of basic intervals $I_s$,
$s\in\{0,1\}^n$ of level $n$ and gaps $J_s$ of level $n$ inside of them.
Note that $D=\bigcap_{n\in\N}D_n$ where $D_n$ is the union of basic intervals of level $n$.
For every $x\in D$ there is a unique sequence $(x_1,x_2,\dots)\in\{0,1\}^\N$ such that
$\{ x\}=\bigcap_{n\in\N}I_{x|n}$ where $x|n:=(x_1,\dots, x_n)$ for $n\in\N$.

To simplify the further reasoning suppose that $l(I) = 0$ and $r(I) =1$. If it is not the case, then we just consider two additional intervals $(0,l(I))$ and $(r(I),1)$ that can play a role of gaps.
%The sequence $(x_n)$ will be called the address of $x$ and will be identified with $x$.

We can present the countable set $Q$ as $\{x_n\}_n \cup \{y_n\}_n$ where $x_n$'s are isolated points in $F$, and $y_n$'s are accumulation points of the set $\{x_n\}_n$. (Symbols $\{x_n\}_n$ and $\{y_n\}_n$ mean that these sets are countable; they may be finite or even empty.)
Observe that each element $x_n$ belongs to some interval $J_s$. 
%Moreover, in every $J_s$ there is countably many elements $x_n$ (maybe finitely many). 
For a fixed $s$, the set $J_s \setminus F = J_s \setminus Q$ is open, so it can be presented as the union of a countable (possibly finite) disjoint family of open components numbered as $J_{s,(i)}$ with $i$'s belonging either to the whole $\N$ or to an initial segment of $\N$.

Consider the function $h\in C^\infty [0,1]$ as in Example \ref{exx}.
%$$h(x):=\left\{\begin{array}{ll}
%e^{-\frac{1}{x^2-x}} & \text{ if } x\in (0,1);\\
%0 & \text{ otherwise},
%\end{array}\right.$$ 
Given an interval $P=[a,b]$,
define $h_P\colon [0,1]\to\R$ by $h_P(x):=h\left(\frac{x-a}{b-a}\right)$. %We know that $h\in C^{\infty}[0,1]$.

Fix $k \in \N \cup \{0\}$ and $s \in \{0,1\}^{k}$ (for $k=0$, $s$ is equal to the empty sequence, so $J_s = J$). Consider the interval $J_s$. To any interval $J_{s,(i)}$ we assign a function $f_{s,(i)}:=\pm c_{s,(i)} h_{\on{cl}J_{s,(i)}}$ where 
$c_{s,(i)} := \frac{1}{n^2 2^i}\exp\left(\frac{-1}{|J_{s,(i)}|}\right)$. The key point is the choice of the signs $\pm$. 
If $J_{s,(1)} = J_s$ (that is, if there are no $x_n$'s and $y_n$'s in $J_s$), then let $f_{s,(1)}:=-c_{s,(1)} h_{\on{cl}J_{s,(1)}}$
if the length of $s$ is odd, and $f_{s,(1)}:=c_{s,(1)} h_{\on{cl}J_{s,(1)}}$
if the length of $s$ is even.  

In the remaining cases we proceed as follows. 
The set $J_s\setminus\{ y_n\}_n$ is open, so it is the union of a countable disjoint family $\{V_m\colon m\in M\}$ of open components. Note that every interval $J_{s,(i)}$ is contained in exactly one interval $V_m$. So, it is enough to fix $V_m$ and show the choice of sign of functions $f_{s,(i)}$ for all intervals $J_{s,(i)}$ contained in $V_m$. 
Denote by $\mc V_m$ the family of
all such intervals. Observe that $V_m=(V_m\cap\{ x_n\}_n)\cup\bigcup \mc V_m$. 
Intervals $J_{s,(i)}$ in $\mc V_m$ can be ordered with respect to the following ordering:
$K\preceq L$ iff $K=L$  or there are  $K_1,\dots , K_l\in\mathcal V_m$  with  
$K=K_1$, $L=K_l$ and  $r(K_i)=l(K_{i+1})$  for $i=1,\dots, l-1$.
Then $\preceq$ on $\mc V_m$ is isomorphic to the usual ordering on exactly one of the following sets:\\
$1^0$ $\{ 1,\dots ,j\}$ for some $j\in\N$, if the family $\mathcal V_m$ is finite;\\
$2^0$ $-\N$, if $l(V_m)$ is a right-hand point of accumulation of $x_n$'s in $V_m$, and $r(V_m)$ is not a 
left-hand point of accumulation of $x_n$'s in $V_m$;\\
$3^0$ $\N$, if $l(V_m)$ is not a right-hand point of accumulation of $x_n$'s in $V_m$, and $r(V_m)$ is a
left-hand point of accumulation of $x_n$'s in $V_m$;\\
$4^0$ $\Z$, if $l(V_m)$ is a right-hand point of accumulation of $x_n$'s in $V_m$, and $r(V_m)$ is a 
left-hand point of accumulation of $x_n$'s in $V_m$.

In each of these cases, by simple induction, we alternate the signs of functions $f_{s,(i)}$ for neighbouring intervals 
$J_{s,(i)}$ in $\mc V_m$. For instance, in case $4^0$, we renumber intervals $J_{s,(i)}$ in $\mc V_m$ (according to the respective isomorphism) as $J^*_k$, $k\in\Z$, and choose the sign of the respective $f_{s,(i)}$ for $J^*_0$ as $+$. Then choose the sign $-$ for $J^*_1$, $J^*_{-1}$, next we choose the sign $+$ for $J^*_2$, $J^*_{-2}$ and so on.

By the above method, we have defined all the functions $f_{s,(i)}$ for the intervals $J_{s,(i)}$ in such a way that they have different signs on any two neighbours.

Now, fix an interval $J_{s,(i)}$ with $s\in\{0,1\}^n$, and fix an integer $p\ge 0$. We can calculate the $p$-th derivative of $f_{s,(i)}$ at a point $x\in J_{s,(i)}$ as follows
$$f_{s,(i)}^{(p)}(x)=\pm c_{s,(i)}\frac{1}{|J_{s,(i)}|^p} h^{(p)}\left(\frac{x-l(J_{s,(i)})}{|J_{s,(i)}|}\right).$$
Note that 
\begin{equation} \label{main}
||f_{s,(i)}^{(p)}||\le c_{s,(i)}\frac{||h^{(p)}||}{|J_{s,(i)}|^p} =\frac{1}{n^2 2^i}\exp\left(\frac{-1}{|J_{s,(i)}|}\right) \cdot \frac{||h^{(p)}||}{|J_{s,(i)}|^p} \le \frac{M_p}{n^2 2^i}\;\;\mbox{ for all } i, 
\end{equation}
where $M_p =||h^{(p)}|| \cdot \sup\limits_{x\in[0,1]} \frac{e^{\frac{-1}{x}}}{x^p} < \infty$. Hence every series $\sum_i f_{s,(i)}^{(p)}$, $p\ge 0$, is uniformly convergent on $[0,1]$ by the Weierstrass test. Therefore, the function $f_s:= \sum_i f_{s,(i)}$ is in $C^{\infty}[0,1]$ because every function $f_{s,(i)}$ is in $C^{\infty}[0,1]$.
Also $g_{n}:= \sum_{s\in\{0,1\}^n} f_{s}$ is in $C^{\infty}[0,1]$.
From (\ref{main}) it follows that, for any $p \ge 0$ and $n\in\N$, we have
$$
||g_{n}^{(p)}||\le \frac{M_p}{n^2}.
$$
Hence every series $\sum_{n=1}^{\infty} g_{n}^{(p)}$, $p\ge 0$, is uniformly convergent on $[0,1]$ since the Weierstrass test works. Therefore, the function $f:= \sum_{n=0}^{\infty} g_{n}$ is in $C^{\infty}[0,1]$.

We will show that $F=E(f)$.
First, fix $x\in F$ and a neighbourhood $U$ of $x$. If $x \in D$, then, by the construction, $f(x)=0$.
Also we can find two gaps $J'$ and $J''$ obtained in the construction of $D$
such that $J'\cup J''\subset U$ where $J'$ is of odd level and $J''$ is of even level.
But there are $y' \in J'$ and $y'' \in J''$ such that $f(y')<0$ and $f(y'')>0$, so $x\in E(f)$. 

If $x = x_n$ for some $n$, then also $f(x) = 0$. Moreover, $x_n = r(J_{s,(i)})=l(J_{s,(j)})$ for some $s \in \{0,1\}^{<\N}$ and $i,j \in \N$. The intervals $J_{s,(i)}$ and $J_{s,(j)}$ are neighbours, thus $f$ attains positive values on one of them and negative values on the other. Hence $x \in E(f)$.

If $x=y_m$ for some $m$, then $f(x)=0$ and there is an increasing sequence of indices $(k_n)$ such that $x_{k_n} \to y_m$. Without loss of generality we can assume that all of the terms $x_{k_n}$ belong to the interval $J_s$ for some $s \in \{0,1\}^{<\N}$. Since $f$ is continuous, then $f(y_m) = \lim\limits_{n\to\infty} f(x_{k_n}) = 0.$ Moreover, since $U$ contains infinitely many points $x_{k_n}$, there are some neighbouring intervals $J_{s,(i)}$, $J_{s,(j)}$ contained in $U$. Thus $f$ attains positive values on one of them and negative on the other. Hence $x \in E(f)$.

On the other hand, if $x \notin F$, then $x \in J_s \setminus (\bigcup_{n}\{x_n\}_n \cup \bigcup_{n}\{y_n\}_n) $ for 
some $s \in \{0,1\}^{<\N}$. Hence $x \in J_{s,(j)}$ for some $j \in \N$. But then $f(x) > 0$ or $f(x)< 0$, so $x \notin E(f)$.
This ends the proof of equality $E(f) = F$.
\end{proof}

\section{An open problem}
It would be interesting to decide which situation is more common (in the Baire category sense)
for functions $f\in C[0,1]$:
when $\lambda(E(f))=0$ or when $\lambda(E(f))>0$.
Of course, we should restrict this question to functions $f\in C[0,1]$ for which $E(f)\neq\emptyset$.
Firstly, we need to assume $f^{-1}[\{0\}]\neq\emptyset$. Secondly, since $E(f)$ is nowhere dense,
it is natural to assume that the interior $\on{Int}(f^{-1}[\{0\}])$ is empty. Also, we should admit a situation where
$E(f)$ is of positive measure. To this aim, fix $\alpha\in (0,1)$ and assume that
$\lambda(f^{-1}[\{ 0\}])\ge \alpha$. Thus it is reasonable to consider the following set
\begin{equation} \label{equa}
ZC_\alpha [0,1]:=\{ f\in C[0,1]\colon f^{-1}[\{0\}]\neq\emptyset \mbox{ and }\on{Int}(f^{-1}[\{0\}])=\emptyset \mbox{ and }\lambda(f^{-1}[\{ 0\}])\ge\alpha\}.
\end{equation}

\begin{proposition} \label{propa}
$ZC_\alpha [0,1]$ is a Polish subspace of $C[0,1]$.
\end{proposition}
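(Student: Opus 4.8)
The plan is to invoke the classical theorem of Alexandrov: a subspace of a Polish space is itself Polish (in the subspace topology) precisely when it is a $G_\delta$ set. Since $C[0,1]$ with the supremum norm is Polish, it suffices to show that $ZC_\alpha[0,1]$ is a $G_\delta$ subset of $C[0,1]$. Writing $Z(f):=f^{-1}[\{0\}]$, I first observe that the condition $Z(f)\neq\emptyset$ is redundant, being forced by $\lambda(Z(f))\ge\alpha>0$. Thus I only have to intersect the two $G_\delta$ sets corresponding to the empty-interior condition and to the measure condition, so the task reduces to treating each of these separately.

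For the empty-interior condition, note that $\on{Int}(Z(f))=\emptyset$ holds if and only if $f$ fails to vanish identically on every rational open subinterval of $[0,1]$; indeed, any interval contained in $Z(f)$ contains a rational one. For each rational interval $(p,q)\subset[0,1]$ the set $A_{p,q}:=\{f\in C[0,1]\colon \exists\,x\in(p,q),\ f(x)\neq 0\}$ is open, because a nonzero value is preserved under small uniform perturbations. Hence $\{f\colon\on{Int}(Z(f))=\emptyset\}=\bigcap_{(p,q)}A_{p,q}$ is $G_\delta$, the intersection ranging over the countably many rational subintervals of $[0,1]$.

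The heart of the argument—and the step I expect to be the main obstacle—is the measure condition $\lambda(Z(f))\ge\alpha$, since the map $f\mapsto\lambda(Z(f))$ is badly discontinuous (a tiny perturbation can destroy the zero set). The idea is to approximate $Z(f)$ from outside by the open sets $U_k(f):=\{x\colon|f(x)|<1/k\}$, so that $Z(f)=\bigcap_k U_k(f)$ and, as the $U_k(f)$ decrease to $Z(f)$, $\lambda(Z(f))=\inf_k\lambda(U_k(f))$. I would then show that each functional $\varphi_k(f):=\lambda(U_k(f))$ is lower semicontinuous: if $f_n\to f$ uniformly, then $\liminf_n\chi_{U_k(f_n)}\ge\chi_{U_k(f)}$ pointwise (a value $|f(x)|<1/k$ persists for large $n$), and Fatou's lemma applied to $\varphi_k(f_n)=\int_0^1\chi_{U_k(f_n)}\,d\lambda$ gives $\liminf_n\varphi_k(f_n)\ge\varphi_k(f)$. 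Since $\lambda(Z(f))\ge\alpha$ is equivalent to $\varphi_k(f)\ge\alpha$ for all $k$, and each lower-semicontinuous superlevel set satisfies $\{\varphi_k\ge\alpha\}=\bigcap_m\{\varphi_k>\alpha-1/m\}$ with open terms, the measure condition defines a $G_\delta$ set. Intersecting the two $G_\delta$ sets established above shows that $ZC_\alpha[0,1]$ is $G_\delta$, and hence Polish by Alexandrov's theorem.
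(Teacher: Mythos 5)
Your proof is correct, and while it follows the same skeleton as the paper's (Alexandrov's theorem plus a condition-by-condition verification that $ZC_\alpha[0,1]$ is $G_\delta$ in $C[0,1]$), you handle the key step --- the measure condition --- by a genuinely different argument. The paper proves the stronger fact that $\{ f\in C[0,1]\colon \lambda(f^{-1}[\{0\}])\ge\alpha\}$ is \emph{closed}: by outer regularity of Lebesgue measure its complement equals the union, over open $U\subset[0,1]$ with $\lambda(U)<\alpha$, of the sets $\{f\colon f^{-1}[\{0\}]\subset U\}$, and each such set is open because $\{f\colon \exists\, x\in [0,1]\setminus U,\ f(x)=0\}$ is closed by compactness of $[0,1]\setminus U$. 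Your route instead establishes lower semicontinuity of $\varphi_k(f)=\lambda(\{x\colon |f(x)|<1/k\})$ via Fatou's lemma and writes the measure condition as $\bigcap_{k,m}\{\varphi_k>\alpha-1/m\}$, using continuity of the measure from above on the nested open sets $U_k(f)$; this yields only $G_\delta$ rather than closedness, but that is entirely sufficient for Alexandrov, and it avoids any appeal to compactness of the domain or to regularity of the measure, so it would transfer to more general settings (e.g., zero sets of continuous functions on a finite measure space where the complement of an open set need not be compact). Two smaller differences: your observation that the nonemptiness condition is redundant (being forced by $\lambda(f^{-1}[\{0\}])\ge\alpha>0$) is correct and neatly sidesteps a step that the paper verifies separately --- and in fact misstates, since $\{f\colon f^{-1}[\{0\}]\neq\emptyset\}$ is closed in $C[0,1]$, not open, though either property gives $G_\delta$; and your treatment of the empty-interior condition via rational subintervals is essentially identical to the paper's, which quantifies over a countable base of open sets.
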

\begin{proof}
By the Alexandrov theorem, it suffices to show that $ZC_\alpha [0,1]$ is of type $G_\delta$.
Firstly, note that the set 
$\{f\in C[0,1]\colon f^{-1}[\{0\}]\neq\emptyset\}$ is open since its complement
$$\{f\in C[0,1]\colon\exists x\in [0,1],\; f(x)=0\}$$
 is closed which follows from the compactness of $[0,1]$.
 
Next, given a base $(U_n)_{n\in\N}$ of open sets in $[0,1]$, observe that 
$$\{ f\in C[0,1]\colon \on{Int}(f^{-1}[\{0\}])=\emptyset\}
=\bigcap_{n\in\N}\bigcup_{x\in U_n}\{f\in C[0,1]\colon f(x)\neq 0\}$$ 
and this set is of type $G_\delta$.

Finally, note that the set $\{ f\in C[0,1]\colon \lambda(f^{-1}[\{ 0\}])\ge\alpha\}$ is closed since
its complement can be expressed in the form 
$$\{ f\in C[0,1]\colon (\exists\; U\; \mbox{- open, }\lambda(U)<\alpha)\; f^{-1}[\{ 0\}])\subset U\}.$$
Here the set $\{f\in C[0,1]\colon f^{-1}[\{ 0\}]\subset U\}$ is open since 
$\{ f\in C[0,1]\colon (\exists\; x\in [0,1]\setminus U)\; f(x)=0\}$ is closed.
Consequently, by definition (\ref{equa}), the set $ZC_\alpha[0,1]$ is of type $G_\delta$.
\end{proof}

\begin{problem}
Given $\alpha\in (0,1)$, establish the Baire category of the following complementary sets in the Polish space $ZC_\alpha[0,1]$
$$\{f\in ZC_\alpha[0,1]\colon \lambda(E(f))=0\},\;\;\{f\in ZC_\alpha[0,1]\colon \lambda(E(f))>0\}.$$
\end{problem}

\section{Appendix: the proof of Theorem \ref{BV}}
\begin{proof}
Observe that $U=\bigcap_{n\in\N}U_n$
where
$$U_n:=\bigcup_{0=x_0<x_1<\dots <x_k=1}\left\{ f\in C[0,1]\colon \sum_{i=1}^k |f(x_i)-f(x_{i-1})|>n\right\} .$$
The sets $U_n$ are open, so $U$ is of type $G_\delta$. 

We will prove that each of the sets $C[0,1] \setminus U_n$ is porous.
Fix $f\in C[0,1]$ and $\ve >0$. For any $n \in \N$, we can find a polygonal function $g\in C[0,1]$ such that $||g||=\frac{\ve}{2}$ and
there is a partition $0=x_0<x_1<\dots <x_k=1$ of $[0,1]$ satisfying
\begin{equation} \label{dwa}
\Var(g,[0,1])=\sum_{i=1}^k|g(x_i)-g(x_{i-1})|>\Var(f,[0,1])+n.
\end{equation}
Indeed, it suffices to put $g(x):=\ve\on{dist}(\frac{k}{2}x, \Z)$ for $x\in [0,1]$ with $k\in 2\N$ such that $\frac{k\ve}{4} > \Var(f,[0,1])+n$. In the role of $x_i$'s, we take the first coordinates of vertices of the polygonal function $g$, that is, points of the form $\frac{i}{k}$ for $i =0,\dots,k$. Note that
$$\sum_{i=1}^k|g(x_i)-g(x_{i-1})| = \frac{k\ve}{2}.$$

Now, we will show that 
\begin{equation} \label{tre}
B\left(f+g, \frac{\ve}{8}\right) \subset B(f,\ve)\cap U_n. 
\end{equation}
We have $B(f+g,\frac{\ve}{8})=f+B(g,\frac{\ve}{8})$.
Fix any $h \in B(g, \frac{\ve}{8})$. Then 
$$||f+h-f||\le ||h-g||+||g||<\frac{\ve}{8}+\frac{\ve}{2}=\frac{5\ve}{8}<\ve.$$
This yields $B(f+g, \frac{\ve}{8}) \subset B(f,\ve)$.
For the taken $h$, put $\delta_i := h(x_i)-g(x_i)$ if $i=0,1,\dots,k$. Clearly, $\delta_i \in (-\frac{\ve}{8},\frac{\ve}{8})$. 
Thus 
$$\sum_{i=1}^k|h(x_i)-h(x_{i-1})|=\sum_{i=1}^k|g(x_i)+\delta_i-g(x_{i-1})-\delta_{i-1}| $$$$\geq \sum_{i=1}^k|g(x_i)-g(x_{i-1})|-\sum_{i=1}^k|\delta_{i-1}-\delta_i| > \frac{k\ve}{2} -k\frac{2\ve}{8}  = \frac{k\ve}{4} >\Var(f,[0,1])+n.$$
Consequently,
$$\sum_{i=1}^k|(f+h)(x_i)-(f+h)(x_{i-1})|  
\ge \sum_{i=1}^k|h(x_i)-h(x_{i-1})|-
\sum_{i=1}^k|f(x_i)-f(x_{i-1})|$$
$$>\Var(f,[0,1])+n-\Var(f,[0,1])=n.$$
Hence $B(f+g, \frac{\ve}{8}) \subset U_n$.

Now, by (\ref{tre}), it follows that $\gamma(f,\ve,C[0,1]\setminus U_n) \geq \frac{\ve}{8}.$ Thus, for any $f \in C[0,1]$ (in particular, for all $f \in C[0,1]\setminus U_n$) we have
$$p(C[0,1]\setminus U_n,f) =2 \limsup\limits_{\ve \to 0^+}  \frac{\gamma(f,\ve,C[0,1]\setminus U_n)}{\ve} \geq 2 \frac{\ve}{8\ve} = \frac{1}{4} > 0.$$
Therefore, the set $C[0,1]\setminus U_n$ is porous for all $n \in \N$. So, the set $C[0,1] \setminus U = \bigcup_{n \in \N} (C[0,1] \setminus U_n)$ is $\sigma$-porous as desired.
\end{proof}

From the above proof it follows that the set $C[0,1]\setminus U$ has even a stronger property. Namely,
each set $C[0,1]\setminus U_n$, $n\in\N$, has positive lower porosity at every point where
the lower porosity is defined as in formula (\ref{por}) with $\limsup$ replaced by $\liminf$.
This kind of porosity was considered in \cite{Za1}.

\end{document}